\renewcommand{\AA}{\mathbb{A}}
\newcommand{\GG}{\mathbb{G}}
\newcommand{\KK}{\mathbb{K}}
\newcommand{\CC}{\mathbb{C}}
\newcommand{\PP}{\mathbb{P}}
\newcommand{\SAut}{\mathrm{SAut}}
\newcommand{\Aut}{\mathrm{Aut}}
\newcommand{\GL}{\mathrm{GL}}
\newcommand{\SL}{\mathrm{SL}}
\newcommand{\Gr}{\mathrm{Gr}}
\theoremstyle{definition}
\newtheorem{example}{Example}
\newtheorem{remark}{Remark}
\newtheorem{definition}{Definition}
\newtheorem{problem}{Problem}
\theoremstyle{plain}
\newtheorem{corollary}{Corollary}
\newtheorem{lemma}{Lemma}
\newtheorem{proposition}{Proposition}
\newtheorem{theorem}{Theorem}
\newtheorem*{TheoremA}{Theorem A}
\newtheorem*{TheoremB}{Theorem B}
\newtheorem*{TheoremC}{Theorem C}
\begin{document}
\sloppy 
\title[On images of affine spaces]{On images of affine spaces}
\author{Ivan Arzhantsev}
\address{Faculty of Computer Science, HSE University, Pokrovsky Boulevard 11, Moscow, 109028 Russia}
\email{arjantsev@hse.ru}
\thanks{Supported by the grant RSF-DST 22-41-02019}
\subjclass[2010]{Primary 14A10, 14R10 \ Secondary 14M20, 14M22}
\keywords{Affine space, morphism, image, unirational variety, invertible function, flexible variety, toric variety, homogeneous space}
\begin{abstract}
We prove that every non-degenerate toric variety, every homogeneous space of a connected linear algebraic group without non-constant invertible regular functions, and every variety covered by affine spaces admits a surjective morphism from an affine space. 
\end{abstract} 
\maketitle

\section{$A$-images and very flexible varieties}

There is no doubt that the affine spaces $\AA^m$ play the key role in mathematics and other fields of science. It is all the more surprising that despite the centuries-old history of study, to this day a number of natural and even naive questions about affine spaces remain open. Here we can mention the Jacobian conjecture, the cancellation problem, the linearization problem, the problem of recognition of tame and wild automorphisms, and a number of other unsolved problems. The question of which varieties can be realized as images of morphisms from an affine space to other varieties also seems nontrivial. The purpose of this note is to show that the class of such images is surprisingly wide and entirely includes several important classes of algebraic varieties. Our results are based on the theory of flexible varieties developed in~\cite{AFKKZ} and subsequent works, and on a canonical quotient realization of a variety coming from the theory of Cox rings, see~\cite{Cox,ADHL}. 

Below we consider algebraic varieties over an algebraically closed field $\KK$ of characteristic zero. 

\begin{definition}
An algebraic variety $X$ is called an \emph{$A$-image} if for some positive integer $m$ there is a surjective morphism $\varphi\colon \AA^m\to X$. 
\end{definition} 

Clearly, every $A$-image is irreducible. Moreover, if $f$ is an invertible regular function on $X$, the pull-back function $\varphi^*(f)$ is an invertible regular function on $\AA^m$. This proves that every invertible regular function on an $A$-image is constant. Finally, let us recall that an irreducible algebraic variety $X$ is \emph{unirational} if the field of rational functions $\KK(X)$
can be embedded in a purely transcendental extension $\KK(x_1,\ldots,x_m)$ of the ground field $\KK$, or, equivalently, there exists a rational dominant morphism from some affine space $\AA^m$ to $X$. It follows that every $A$-image is a unirational variety. Honestly speaking, we have no example of a unirational variety without non-constant invertible regular functions which is not an $A$-image. Moreover, explicit computations show that many varieties from this class can be realized as images of affine spaces. 

\begin{example} \label{ex1}
There is a surjective morphism $\varphi\colon\AA^1\to\PP^1$. It can be given, for example, by 
$$
x\to [1+x^2 : x].
$$
\end{example}

\begin{example} \label{ex2}
It is easy to check that the image of the morphism $\AA^2\to\AA^2$ given by
$$
(x,y)\to (1+xy, x+y^2+xy^3) 
$$
is $\AA^2\setminus\{0\}$. So we obtain a surjective morphism $\varphi\colon\AA^2\to\AA^2\setminus\{0\}$. Let us note that in~\cite[Section~1]{LT} the authors were interested in the existence of such a morphism. 
\end{example} 

\begin{remark} 
As we learned from~\cite{Ba}, Zbigniew Jelonek has constructed such and more general examples much earlier, see~\cite{Je}. In~\cite[Proposition~1.5]{Ba}, one may find an explicit formulas for a morphism $\CC^n\to \CC^n$, $n\ge 2$, such that the complement of the image is a given finite set of points. 
\end{remark} 

Since there is a quotient morphism $\AA^2\setminus\{0\}\to\PP^1$, Example~\ref{ex2} provides one more way to obtain the projective line $\PP^1$ as an image of affine space. In the present note we generalize this approach in order to obtain many varieties as $A$-images. 

\smallskip

Let us proceed with the following obvious observations. 

\begin{lemma}
\begin{enumerate}
\item[a)]
If $\psi\colon X_1\to X_2$ is a surjective morphism of algebraic varieties and $X_1$ is an $A$-image, then $X_2$ is an $A$-image. 
\item[b)] 
If $X_1$ and $X_2$ are $A$-images, then the direct product $X_1\times X_2$ is an $A$-image. 
\item[c)]
An irreducible curve $C$ is an $A$-image if and only if the normalization of $C$ is $\PP^1$ or~$\AA^1$. In these cases $C$ is an image of the line $\AA^1$. 
\end{enumerate}
\end{lemma}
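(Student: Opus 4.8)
The plan is to prove the three parts separately, as each is genuinely routine given the definition of $A$-image.

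For part a), the strategy is simply to compose morphisms. Since $X_1$ is an $A$-image, there is a positive integer $m$ and a surjective morphism $\varphi\colon\AA^m\to X_1$. The composition $\psi\circ\varphi\colon\AA^m\to X_2$ is a morphism of algebraic varieties, and since both $\varphi$ and $\psi$ are surjective, so is their composition. Hence $\psi\circ\varphi$ witnesses that $X_2$ is an $A$-image.

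For part b), the idea is to take a product of the two given morphisms. Writing $\varphi_1\colon\AA^{m_1}\to X_1$ and $\varphi_2\colon\AA^{m_2}\to X_2$ for surjective morphisms coming from the hypothesis, I would form $\varphi_1\times\varphi_2\colon\AA^{m_1}\times\AA^{m_2}\to X_1\times X_2$. The product of two surjective maps is surjective, and the source is canonically identified with $\AA^{m_1+m_2}$, so this exhibits $X_1\times X_2$ as an $A$-image.

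For part c), which is the only part requiring a genuine argument, I would argue in both directions. For the ``if'' direction, suppose the normalization $\widetilde{C}$ of $C$ is $\PP^1$ or $\AA^1$; the normalization map $\nu\colon\widetilde{C}\to C$ is a surjective morphism, so by part a) it suffices to show that $\PP^1$ and $\AA^1$ are themselves $A$-images. The line $\AA^1$ is trivially its own $A$-image via the identity, and $\PP^1$ is an $A$-image by Example~\ref{ex1}; in either case $C$ is an image of $\AA^1$ by composing with the normalization. For the ``only if'' direction, suppose $C$ is an $A$-image, so there is a surjective morphism $\varphi\colon\AA^m\to C$. Then $C$ is unirational, hence its normalization $\widetilde{C}$ is a unirational smooth curve, and a smooth unirational (equivalently rational) curve is either $\PP^1$ or an open subset of it. The main subtlety, and the step I expect to require the most care, is to rule out $\widetilde{C}$ being $\PP^1$ minus two or more points: such a curve carries non-constant invertible regular functions, whereas any $A$-image admits only constant invertible regular functions, as observed just after the definition. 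Since invertible regular functions lift along the finite normalization map, this forces $\widetilde{C}$ to be $\PP^1$ or $\AA^1$, completing the characterization.
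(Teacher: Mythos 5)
The paper itself gives no proof of this lemma (it is introduced as an ``obvious observation''), so the only question is whether your argument is complete. Parts a) and b) are correct, as is the ``if'' direction of c) together with the final claim that $C$ is then an image of $\AA^1$. However, the ``only if'' direction of c) has a genuine gap: your unit argument runs in the wrong direction. The normalization map $\nu\colon\widetilde{C}\to C$ gives an injection $\nu^*\colon\KK[C]^\times\hookrightarrow\KK[\widetilde{C}]^\times$, i.e.\ invertible functions on $C$ lift to $\widetilde{C}$ --- but what you need is \emph{descent}: that a non-constant unit on $\widetilde{C}$ yields a non-constant unit on $C$. That is false in general. Take $\widetilde{C}=\GG_m=\Spec\KK[t,t^{-1}]$ and let $C$ be the curve obtained by gluing the points $t=1$ and $t=2$, so that $\KK[C]=\{f\in\KK[t,t^{-1}]\,:\,f(1)=f(2)\}$. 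Any unit of $\KK[C]$ is a unit of $\KK[t,t^{-1}]$, hence of the form $ct^n$, and the gluing condition $c\cdot 1^n=c\cdot 2^n$ forces $n=0$; thus $\KK[C]^\times=\KK^\times$ although the normalization is $\PP^1$ minus two points. Your argument cannot exclude this curve, yet the lemma (correctly) asserts it is not an $A$-image.

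The repair is to lift the morphism rather than the functions. Since $\AA^m$ is smooth, hence normal, and $\varphi\colon\AA^m\to C$ is dominant, the universal property of normalization gives a factorization $\varphi=\nu\circ\widetilde{\varphi}$ with $\widetilde{\varphi}\colon\AA^m\to\widetilde{C}$; moreover $\widetilde{\varphi}$ is dominant, since otherwise its image, and therefore the image of $\varphi$, would be a finite set of points. Now any $u\in\KK[\widetilde{C}]^\times$ pulls back to an invertible regular function $\widetilde{\varphi}^*(u)$ on $\AA^m$, which must be constant, and dominance of $\widetilde{\varphi}$ then forces $u$ itself to be constant. So $\KK[\widetilde{C}]^\times=\KK^\times$, and combined with your L\"uroth argument (a smooth unirational curve is an open subset of $\PP^1$) this rules out $\PP^1$ minus two or more points, giving $\widetilde{C}\cong\PP^1$ or $\AA^1$. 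Note that in the glued example above this argument works where yours does not: a lift $\AA^m\to\GG_m$ would have to be constant, so that $C$ is indeed not an $A$-image.
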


It is useful to observe that an affine variety $X$ is an $A$-image if and only if the algebra of regular functions $\KK[X]$ can be embedded into some polynomial algebra $\KK[x_1,\ldots,x_m]$ is such a way that any proper ideal in $\KK[X]$ generates a proper ideal in $\KK[x_1,\ldots,x_m]$. 

It was observed by the referee that the concept of an $A$-image is natural in the context of the famous Zariski Cancellation problem, which asks whether an affine variety $X$ such that the direct product $X\times\AA^k$ is isomorphic to the affine space $\AA^m$ is necessarily isomorphic to the affine space $\AA^{m-k}$. Any potentional counterexample to this problem is an affine variety, which is non-isomorphic to an affine space and is an image of $\AA^m$ under the projection to the first component in the direct product decomposition. 

Let us mention one geometric property that all $A$-images have. It is well known that for any $m\ge 2$, any positive integer $k$ and any two $k$-tuples of pairwise distinct points on $\AA^m$ there is an automorphism of $\AA^m$ which sends the first tuple to the second one. This implies that for any finite subset $F$ in $\AA^m$ there is a curve $C$ in $\AA^m$ isomorphic to $\AA^1$ that contains $F$; cf.~\cite[Corollary~4.18]{AFKKZ}. We conclude that every $A$-image $X$ is a \emph{strongly $\AA^1$-connected} variety, i.e., for every finite subset $F$ in $X$ there is a morphism $\AA^1\to X$ whose image contains $F$. At the same time it is known that for any finine subset $F$ in a complete unirational variety $X$ there exists a moprhism from $\PP^1$ (and thus from $\AA^1$) to $X$, whose image contains $F$; see~\cite[IV.3.9]{Ko}. 

\smallskip 

Now we come to a central concept of this note. Denote by $\GG_a$ the additive group $(\KK,+)$ of the ground field. Any nontrivial regular action $\GG_a\times X\to X$ on an algebraic variety $X$ gives rise to a subgroup $H$ in the automorphism group $\Aut(X)$, which we call a \emph{$\GG_a$-subgroup}. Let $\SAut(X)$ be the subgroup in $\Aut(X)$ generated by all $\GG_a$-subgroups. 

\begin{definition}
An irreducible variety $X$ is called \emph{very flexible} if the group $\SAut(X)$ acts on $X$ transitively.
\end{definition}

Any very flexible variety is smooth. By \cite[Theorem~0.1]{AFKKZ}, \cite[Theorem~2]{APS}, and \cite[Theorem~1.11]{FKZ}, for a smooth quasi-affine variety the condition to be very flexible is equivalent to flexibility in the sence of definitions given in these papers. 

\begin{proposition} \cite[Proposition~1.4]{AFKKZ} \label{Pr1} 
Let $X$ be a very flexible variety. Then there are (not necessarily distinct) $\GG_a$-subgroups $H_1,\ldots,H_m$ in $\Aut(X)$ such that
$$
X=(H_1\cdot\ldots\cdot H_m). x
$$
for any point $x\in X$. 
\end{proposition}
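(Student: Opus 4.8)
The plan is to reduce everything to a single global-spanning lemma for the velocity fields of $\GG_a$-subgroups, and then to obtain surjectivity from every point by combining a density argument with a doubling of the word. Throughout I use that a very flexible $X$ is smooth and, by the equivalence recalled just before the proposition, that the velocity vectors of the $\GG_a$-orbits through any point $x$ span the tangent space $T_xX$. The first and main task is to pass from this pointwise (a priori infinite) spanning to a finite, uniform one: for a $\GG_a$-subgroup $H$ write $\xi_H$ for its velocity vector field, a global section of the locally free, rank-$n$ tangent sheaf $T_X$, where $n=\dim X$. On an affine open $U$ the module $T_X(U)$ is finitely generated over the Noetherian ring $\KK[U]$; the submodule generated by the $\xi_H|_U$ has the same fibres as $T_X|_U$ by flexibility, so by Nakayama its associated subsheaf is all of $T_X|_U$, whence finitely many $\xi_H$ already generate $T_X(U)$. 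Covering $X$ by finitely many affine opens (it is Noetherian, hence quasi-compact) and collecting these sections, I obtain $\GG_a$-subgroups $H_1,\dots,H_m$ with $\xi_{H_1}(x),\dots,\xi_{H_m}(x)$ spanning $T_xX$ at every $x\in X$.

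Next, for each $x$ I would consider the morphism $\alpha_x\colon\AA^m\to X$, $(t_1,\dots,t_m)\mapsto h_1(t_1)\cdots h_m(t_m).x$, where $h_i(t)$ denotes the element of $H_i$ with parameter $t$. Its differential at the origin sends $(a_i)$ to $\sum_i a_i\,\xi_{H_i}(x)$, which is surjective by the previous step; in characteristic zero a morphism with surjective differential at a point is dominant, so $\alpha_x$ is dominant and, by Chevalley's theorem, its image $A.x$ (with $A=H_1\cdots H_m$) contains a dense open subset of $X$. Since reordering the factors does not affect the spanning condition, the reversed word $A^{\mathrm{op}}=H_m\cdots H_1$ is dominant from every base point as well.

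To upgrade dominance to genuine surjectivity, I would fix arbitrary points $x,y\in X$. The sets $A.x$ and $A^{\mathrm{op}}.y$ each contain a dense open subset of the irreducible variety $X$, so they meet in some point $z$. Writing $z=a.x$ with $a\in A$ and, from $z\in A^{\mathrm{op}}.y$, recovering $y$ by applying the inverse elements in reverse order, I get $y=a'.z$ with $a'\in A$; hence $y=(a'a).x\in(A\cdot A).x$. As $y$ is arbitrary, $(A\cdot A).x=X$, and as $x$ is arbitrary the doubled word $H_1,\dots,H_m,H_1,\dots,H_m$, relabelled $H_1,\dots,H_{2m}$, realises $X=(H_1\cdots H_{2m}).x$ for every $x$, which is the assertion.

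The step I expect to be the real obstacle is the first one: upgrading the pointwise spanning guaranteed by flexibility, which a priori requires infinitely many $\GG_a$-subgroups, to one finite list valid simultaneously at all points. This is exactly what makes a single word uniform in $x$, and it is where the global geometry of $X$ — coherence of $T_X$, Noetherianity, and the possible non-affineness of $X$ (handled by the affine-cover reduction) — genuinely enters. Once finiteness and uniformity are secured, the dominance and doubling steps are formal.
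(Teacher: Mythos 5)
The decisive gap is in your first step, which you yourself flag as the crux. The spanning of $T_xX$ by velocity vectors of $\GG_a$-subgroups is not available for a general very flexible variety: the equivalence recalled in the paper just before the proposition (citing \cite[Theorem~0.1]{AFKKZ}, \cite[Theorem~2]{APS}, \cite[Theorem~1.11]{FKZ}) is stated only for smooth \emph{quasi-affine} varieties, whereas Proposition~\ref{Pr1} is asserted, and is actually used in this paper, for arbitrary very flexible varieties; for instance Theorem~C applies it (via Proposition~\ref{Pr2}) to homogeneous spaces $G/H$ that may be projective, such as $\PP^n$ or $\Gr(4,2)$, and for those your argument never gets started. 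Worse, even in the quasi-affine case the appeal is circular if the proof is meant to be self-contained: the implication ``$\SAut(X)$ transitive $\Rightarrow$ velocity vectors span $T_xX$ at every point'' is not elementary; in \cite{AFKKZ} it is deduced by applying generic smoothness to a surjective word map $H_1\times\ldots\times H_m\to X$, i.e.\ it is a consequence of the very proposition you are proving. Repairing your approach for general $X$ would require an independent proof of the spanning property, e.g.\ showing that the span of the velocity fields is a flow-invariant, hence involutive, distribution and invoking an algebraic Frobenius-type integrability argument to contradict transitivity when the rank is less than $\dim X$ --- a genuine extra piece of work that your text does not contain.

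Granting the spanning property, the remainder of your proof is correct and quite elegant: the Nakayama/coherence argument extracting finitely many $\GG_a$-subgroups whose velocity fields span every tangent space is sound; the computation of the differential of $\alpha_x$ at the origin and the conclusion that every word map (in either order, from every base point) is dominant is valid in characteristic zero; and the meeting-in-the-middle trick, intersecting the dense constructible sets $A.x$ and $A^{\mathrm{op}}.y$ and inverting the second word, correctly yields $X=(A\cdot A).x$ for all $x$, which is uniformity in the base point. You should know, however, that this is a genuinely different route from the source: the present paper gives no proof (it quotes \cite[Proposition~1.4]{AFKKZ}), and the proof there is infinitesimal-free --- it works with constructibility (Chevalley) of images of word maps, a maximality/directedness argument identifying the closure of the $\SAut(X)$-orbit with the closure of the image of a single word, openness of orbits in their closures, and a quasi-compactness/composition argument to make the word independent of the base point. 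That argument needs no smoothness or tangent-space input at all, which is precisely why it can serve as a foundation for the flexibility-versus-transitivity theorems you are trying to quote, rather than depending on them.
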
 

The next proposition is borrowed from~\cite[Corollary~1.11]{AFKKZ}. 

\begin{proposition} \label{Pr2}
Every very flexible variety is an $A$-image.
\end{proposition}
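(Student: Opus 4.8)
The plan is to turn the group-theoretic decomposition of Proposition~\ref{Pr1} into an honest morphism out of an affine space by composing the one-parameter actions. Fix once and for all a base point $x_0\in X$ and, for the $\GG_a$-subgroups $H_1,\ldots,H_m$ provided by Proposition~\ref{Pr1}, let $\alpha_i\colon\GG_a\times X\to X$ denote the regular action defining $H_i$. Identifying $\GG_a$ with $\AA^1$, I would define
$$
\varphi\colon\AA^m\to X,\qquad \varphi(t_1,\ldots,t_m)=\alpha_1\bigl(t_1,\alpha_2(t_2,\ldots,\alpha_m(t_m,x_0)\ldots)\bigr).
$$
In other words, $\varphi(t_1,\ldots,t_m)$ is the result of letting the element $\alpha_m(t_m,-)\in H_m$ act on $x_0$, then $\alpha_{m-1}(t_{m-1},-)\in H_{m-1}$, and so on up to $H_1$.

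First I would check that $\varphi$ is a morphism. This follows by descending induction: the map $\psi_m\colon\AA^1\to X$, $t_m\mapsto\alpha_m(t_m,x_0)$, is the restriction of the regular action $\alpha_m$ to $\AA^1\times\{x_0\}$, hence a morphism; and if $\psi_{i+1}\colon\AA^{m-i}\to X$ is already a morphism, then
$$
\psi_i\colon\AA^{m-i+1}=\AA^1\times\AA^{m-i}\xrightarrow{\ \id\times\psi_{i+1}\ }\AA^1\times X\xrightarrow{\ \alpha_i\ }X
$$
is a composition of morphisms, hence again a morphism. Taking $i=1$ gives $\varphi=\psi_1$.

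Next comes surjectivity, which is where Proposition~\ref{Pr1} does the real work. By construction each element of $H_i$ is of the form $\alpha_i(t_i,-)$ for some $t_i\in\KK$, so every point of the orbit $(H_1\cdot\ldots\cdot H_m).x_0$ lies in the image of $\varphi$, and conversely. Therefore
$$
\varphi(\AA^m)=(H_1\cdot\ldots\cdot H_m).x_0=X,
$$
the last equality being exactly the content of Proposition~\ref{Pr1} applied at the point $x_0$. Hence $\varphi$ is surjective and $X$ is an $A$-image.

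I do not expect a serious obstacle here: once Proposition~\ref{Pr1} is granted, the only point requiring care is that the composite of the one-parameter actions is genuinely regular, and this is guaranteed by the actions $\alpha_i$ being morphisms of varieties. The fact that the $H_i$ need not be distinct is irrelevant, and even the injectivity of the homomorphism $\GG_a\to\Aut(X)$ defining $H_i$ is not needed, since all that matters for surjectivity is that every element of $H_i$ is realized as $\alpha_i(t_i,-)$ for some parameter $t_i$.
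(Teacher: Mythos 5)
Your proof is correct and is essentially the paper's own argument: the paper fixes $x\in X$ and observes that $(h_1,\ldots,h_m)\mapsto h_1\cdots h_m.x$ is a surjective morphism $H_1\times\cdots\times H_m\cong\AA^m\to X$, which is exactly your map $\varphi$ written via the parametrizations $\alpha_i$. Your inductive verification of regularity and the remark that injectivity of $\GG_a\to\Aut(X)$ is not needed merely make explicit what the paper leaves implicit.
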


\begin{proof}
Fix a point $x\in X$ and take a sequence of $\GG_a$-subgroups $H_1,\ldots,H_m$ as in Proposition~\ref{Pr1}. Then the map 
$$
H_1\times\ldots\times H_m \to (H_1\cdot \ldots \cdot H_m). x, \quad (h_1,\ldots,h_m)\mapsto h_1\ldots h_m.x 
$$
is a surjective morphism from $\AA^m$ to $X$. 
\end{proof} 

Below we show that Proposition~\ref{Pr2} allows to realize many varieties as $A$-images. The next result follows directly from~\cite[Theorem~0.1]{FKZ}. 

\begin{theorem} \label{thvf}
Let $X$ be a very flexible quasi-affine variety and $Z\subseteq X$ be a closed subvariety of codimension at least $2$. Then the variety $X\setminus Z$ is very flexible. 
\end{theorem}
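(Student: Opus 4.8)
The plan is to verify the flexibility criterion directly on $X \setminus Z$. Since $X \setminus Z$ is again smooth and quasi-affine, by the equivalence recalled just after the definition of very flexibility it suffices to show that at every point $x \in X \setminus Z$ the tangent space $T_x(X\setminus Z) = T_x X$ is spanned by the velocity vectors of $\GG_a$-subgroups of $\Aut(X\setminus Z)$. The main difficulty is that a $\GG_a$-action on $X$ need not preserve $X \setminus Z$: its orbits are curves that typically meet $Z$, so the flow does not restrict to the complement. The whole point is therefore to manufacture, out of the $\GG_a$-actions on $X$, new actions that fix $Z$ pointwise and hence descend to $X\setminus Z$, while retaining enough tangent directions at $x$.

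The basic device is the replica construction. A $\GG_a$-action corresponds to a locally nilpotent derivation $\partial$ of the structure sheaf, and for any $h$ in the kernel $\ker\partial$ (that is, any $\partial$-invariant function) the product $h\partial$ is again locally nilpotent and integrates to a $\GG_a$-action. If in addition $h$ vanishes along $Z$, then the vector field $h\partial$ vanishes identically on $Z$, so the associated flow fixes $Z$ pointwise, preserves $X\setminus Z$, and defines a $\GG_a$-subgroup of $\Aut(X\setminus Z)$. Its velocity vector at a point $x$ is $h(x)\,\partial(x)$, a nonzero scalar multiple of $\partial(x)$ precisely when $h(x)\neq 0$. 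Hence it is enough, for each tangent direction, to produce a single invariant $h$ with $h|_Z = 0$ and $h(x) \neq 0$.

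Thus I would fix $x \in X\setminus Z$, use flexibility of $X$ to select $\GG_a$-subgroups with locally nilpotent derivations $\partial_1,\dots,\partial_n$ whose velocity vectors $\partial_1(x),\dots,\partial_n(x)$ span $T_xX$, and for each $i$ seek an invariant $h_i \in \ker\partial_i$ vanishing on $Z$ but not at $x$. The hypothesis $\mathrm{codim}\,Z \geq 2$ is exactly what makes this possible: the invariant ring $\ker\partial_i$ defines a rational quotient $\pi_i\colon X \dashrightarrow Q_i$ with $\dim Q_i = \dim X - 1$, so the image $\pi_i(Z)$ has dimension at most $\dim Z \le \dim X - 2$, that is, codimension at least one in $Q_i$. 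Consequently there is a rational invariant separating $\pi_i(Z)$ from $\pi_i(x)$, and after clearing denominators inside $\ker\partial_i$ one obtains the desired regular invariant $h_i$. Replacing each $\partial_i$ by $h_i\partial_i$ yields $\GG_a$-subgroups of $\Aut(X\setminus Z)$ whose velocity vectors $h_i(x)\partial_i(x)$ still span $T_x X$, which proves very flexibility.

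The step I expect to be the main obstacle is precisely the production of the invariants $h_i$: the quotients $\pi_i$ are only rational, so one must control the indeterminacy locus, guarantee that $\pi_i(x)$ does not lie in the closure of $\pi_i(Z)$, and then descend the separating function to a genuine element of $\ker\partial_i$ that is regular on all of $X$. Handling this cleanly is where one invokes the machinery of~\cite{FKZ}: working with a saturated collection of locally nilpotent derivations and exploiting the infinite transitivity of $\SAut(X)$ supplies enough freedom to choose the $\partial_i$ and the invariants simultaneously so that all the general-position requirements at $x$ can be met at once.
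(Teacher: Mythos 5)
Your strategy---replicas $h\partial$ with $h\in\ker\partial$ vanishing on $Z$---is the natural device here, but the step you defer to ``the machinery of~\cite{FKZ}'' is not a technical loose end: it is the entire theorem. The specific claim that fails is that a dimension count on the rational quotient produces the invariants $h_i$. The obstruction is not only that the $\partial_i$-orbit of $x$ may meet $Z$ (in which case every invariant vanishing on $Z$ vanishes at $x$, so no replica of $\partial_i$ can contribute the direction $\partial_i(x)$ at all); even when the orbit of $x$ and the saturation of $Z$ are disjoint, invariants may fail to separate them, because Rosenlicht-type separation only holds for orbits in general position. Concretely, on $X=\AA^3$ take $\partial = x\,\partial/\partial y + y\,\partial/\partial z$, $Z=\{(0,1,0)\}$ and $x_0=(0,-1,0)$. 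Then $\partial(x_0)\neq 0$ and the orbit of $x_0$ is $\{(0,-1,s)\}$, disjoint from $Z$; nevertheless, for $\epsilon\neq 0$ the single orbit through $(\epsilon,0,-1/(2\epsilon))$ contains both $(\epsilon,1,0)$ and $(\epsilon,-1,0)$, so every $f\in\ker\partial$ satisfies $f(\epsilon,1,0)=f(\epsilon,-1,0)$, hence $f(0,1,0)=f(0,-1,0)$ by continuity. Thus every invariant vanishing on $Z$ vanishes at $x_0$: for this perfectly legitimate member of a spanning family (complete it by $\partial/\partial x$ and $\partial/\partial y$) no replica works, and indeed $\pi(Z)$ and $\pi(x_0)$ are the \emph{same} point of the quotient, so no separation argument of any kind can succeed. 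The derivations must therefore be chosen in a way adapted to $Z$ and $x$ simultaneously with the spanning requirement, and proving that such a choice always exists is precisely the content of the Gromov--Winkelmann theorem for flexible varieties.

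This also makes your final appeal circular: the statement you would need from~\cite{FKZ}---that for every flexible quasi-affine $X$, every closed $Z$ of codimension at least $2$ and every $x\notin Z$ there exist spanning locally nilpotent derivations admitting invariants vanishing on $Z$ but not at $x$---is equivalent to \cite[Theorem~0.1]{FKZ} itself. That theorem is exactly what the paper invokes: its proof of the present statement is the one-line observation that it follows directly from \cite[Theorem~0.1]{FKZ}, via the equivalence (recalled in Section~1) between flexibility and very flexibility for smooth quasi-affine varieties. So either you cite that theorem and are done, as the paper does, or you must reprove it, which your sketch does not accomplish.
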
 

\begin{corollary} \label{cor}
Let $Z$ be a closed subset in $\AA^n$ of codimension at least $2$. Then the variety $\AA^n\setminus Z$ is very flexible.  
\end{corollary}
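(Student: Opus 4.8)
The plan is to deduce Corollary~\ref{cor} directly from Theorem~\ref{thvf} by exhibiting the full affine space $\AA^n$ as the base very flexible quasi-affine variety. First I would observe that $\AA^n$ is itself a very flexible variety: the group $\SAut(\AA^n)$ acts transitively on $\AA^n$, since for any two points one can find an automorphism (indeed a composition of elementary shears generated by $\GG_a$-actions of the form $x_i\mapsto x_i + t\,p(x_1,\ldots,\hat{x_i},\ldots,x_n)$) carrying one to the other. In fact the translations alone, which form $\GG_a$-subgroups, already act transitively, so transitivity of $\SAut(\AA^n)$ is immediate. Moreover $\AA^n$ is obviously quasi-affine (it is affine), so the hypotheses of Theorem~\ref{thvf} on the ambient variety are satisfied.

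Next I would simply invoke Theorem~\ref{thvf} with $X = \AA^n$ and $Z$ the given closed subset of codimension at least $2$. The theorem asserts that $X \setminus Z = \AA^n \setminus Z$ is again very flexible, which is exactly the conclusion of the corollary. There is essentially nothing further to prove once the two hypotheses — very flexibility of $\AA^n$ and the codimension condition — are in place.

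I do not expect any serious obstacle here, since the corollary is a special case of the theorem. The only point requiring a moment's care is the verification that $\AA^n$ qualifies as a very flexible quasi-affine variety in the precise sense of the paper's definition; I would dispatch this by naming the coordinate translations as the transitive family of $\GG_a$-subgroups. (One should note that for $n=1$ the statement is vacuous, as the only closed subset of codimension at least $2$ in $\AA^1$ is empty; for $n\ge 2$ the content is genuine.) Thus the entire proof reduces to the single sentence: \emph{apply Theorem~\ref{thvf} to the very flexible affine space $X=\AA^n$.}
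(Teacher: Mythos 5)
Your proof is correct and is exactly the deduction the paper intends: the corollary is stated as an immediate consequence of Theorem~\ref{thvf} applied to $X=\AA^n$, which is quasi-affine and very flexible since the coordinate translations are $\GG_a$-subgroups acting transitively. Your verification of the hypotheses (including the remark that translations alone suffice) fills in precisely what the paper leaves implicit.
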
 

\section{Three classes of $A$-images}

\subsection{Toric varieties} Let $T$ be an algebraic torus. A normal irreducible variety X is called \emph{toric}, if there is a faithful action of $T$ on $X$ with an open orbit.
We refer to~\cite{CLS,Fu} for a systematic theory of toric variety. 

A toric variety $X$ is called \emph{degenerate} if it is isomorphic to a direct product $Y\times\KK^{\times}$ for some toric variety $Y$. By~\cite[Proposition~3.3.9]{CLS}, a toric variety $X$ is degenerate if and only if there exists a non-constant invertible regular function on $X$. It is easy to see that any toric variety $X$ is isomorphic to a direct product $X'\times (\KK^{\times})^k$ for some non-negative integer~$k$, where $X'$ is a non-degenerate toric variety. Clearly, a toric variety can be an $A$-image only if $X$ is non-degenerate.  

\begin{TheoremA} \label{ThA}
A toric variety $X$ is an $A$-image if and only if $X$ is non-degenerate. 
\end{TheoremA}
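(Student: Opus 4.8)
The statement has two directions, and the ``only if'' part is immediate from the preliminaries. The plan there is just to recall that a degenerate toric variety is, by definition, isomorphic to $Y \times \KK^\times$, so that the coordinate of the $\KK^\times$-factor is a non-constant invertible regular function; since every invertible regular function on an $A$-image is constant, a degenerate toric variety cannot be an $A$-image. Equivalently, one may invoke \cite[Proposition~3.3.9]{CLS}, which identifies non-degeneracy of a toric variety with the absence of non-constant invertible functions. So the real content is the ``if'' direction.

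For that direction the plan is to realize $X$ as a surjective image of a very flexible variety produced by the previous section. Let $X = X_\Sigma$ be a non-degenerate toric variety with fan $\Sigma$ and let $r = |\Sigma(1)|$ be its number of rays. Non-degeneracy is exactly the condition that $X$ has no torus factor, i.e. that the ray generators span the vector space $N_\mathbb{R}$, and this is precisely the hypothesis under which the Cox construction (see~\cite{CLS}) presents $X$ as a good quotient
\[
\pi \colon \AA^r \setminus Z \longrightarrow X, \qquad X = (\AA^r \setminus Z)/\!\!/ G,
\]
by the quasitorus $G = \Hom(\Cl(X), \KK^\times)$, where $Z = V(B(\Sigma))$ is the zero locus of the irrelevant ideal and $\pi$ is surjective. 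I would first set this presentation up carefully, emphasizing that it is non-degeneracy that makes $\AA^r \setminus Z \to X$ the correct quotient of $X$.

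The decisive step is then to check that $Z$ has codimension at least $2$ in $\AA^r$. As a variety, $Z$ is the union of the coordinate subspaces $\{x_\rho = 0 : \rho \in S\}$ taken over those subsets $S \subseteq \Sigma(1)$ that are not contained in $\sigma(1)$ for any cone $\sigma \in \Sigma$, and its codimension equals the least cardinality of such an $S$. Since by the axioms of a fan every ray $\rho \in \Sigma(1)$ is itself a cone of $\Sigma$, no singleton $\{\rho\}$ is among these subsets, so every such $S$ has at least two elements and the bound follows. With this in hand the conclusion assembles from the previous section: by Corollary~\ref{cor} the quasi-affine variety $\AA^r \setminus Z$ is very flexible (and if $Z = \varnothing$ then $\AA^r$ itself is very flexible), by Proposition~\ref{Pr2} it is therefore an $A$-image, and because $\pi$ is a surjective morphism, part~a) of the Lemma above promotes this to the assertion that $X$ is an $A$-image.

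I expect the main obstacle to be conceptual rather than computational: matching the quotient presentation to the exact hypotheses of the tools at hand. One must be certain that non-degeneracy is what legitimizes the Cox quotient of $X$ itself, and not of some torus-factor reduction of it, and that the codimension bound needs no further assumption on the shape of $\Sigma$. The codimension estimate is where I would be most careful, since one might worry that a badly shaped fan could contribute a divisorial component to $Z$; the point to get right is the elementary observation that singletons are always cones, which makes the bound automatic for every fan, so that the full force of non-degeneracy is used only to justify the quotient itself.
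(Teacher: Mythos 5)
Your proposal is correct and follows essentially the same route as the paper: both directions rest on the invertible-function obstruction for degeneracy and, for the converse, on Cox's quotient presentation $\AA^r\setminus Z\to X$ combined with Corollary~\ref{cor}, Proposition~\ref{Pr2}, and surjectivity. The only difference is that you verify the bound $\mathrm{codim}\, Z\ge 2$ by hand (via the observation that singletons of rays are always cones of the fan), whereas the paper absorbs this into the citation of \cite[Theorem~2.1]{Cox}; this is a welcome addition of detail, not a different argument.
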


\begin{proof}
It remains to prove that a non-degenerate toric variety $X$ is an $A$-image. By~\cite[Theorem~2.1]{Cox}, the variety $X$ can be realized as a good quotient by an action of a diagonalizable group on the open subset $\AA^l\setminus Z$, where $Z$ is (maybe empty) union of some coordinate planes of codimension at least $2$ in $\AA^l$. By Corollary~\ref{cor}, the variety 
$\AA^l\setminus Z$ is an $A$-image. So $X$ is an $A$-image as well. 
\end{proof} 

\begin{remark}
It follows from~\cite[Theorem~1.6]{Fo} that for every compact smooth complex toric variety $X$ of dimension $n$ there exists a surjective morpshim $\varphi\colon \CC^n\to X$. The proof is based on an algebraic version of a theory of subelliptic manifolds. The technique used in this work is new to us, and we failed to construct explicitly the corresponding surjective morphisms. But this result motivated us to write the present note.
\end{remark}

\subsection{$A$-covered varieties} Let us recall from \cite[Definition~4]{APS} that an irreducible algebraic variety $X$ is said to be \emph{$A$-covered} if there is an open covering 
$X=U_1\cup \ldots\cup U_r$, where every chart $U_i$ is isomorphic to the affine space $\AA^n$. The class of $A$-covered varieties includes smooth complete spherical varieties, smooth projective rational surfaces, smooth complete rational varieties with a torus action of complexity~1, and many other varieties; see~\cite[Section~4]{APS}. 

\smallskip

Let us give non-tivial examples of affine $A$-covered varieties.

\begin{example}
Fix an integer $k\ge 1$ and consider the surface $X=V(x_1^2-x_2^kx_3-1)$ in~$\AA^3$. Take the divisor $D^+$ on $X$ given by $x_1=1,x_2=0$ and the divisor $D^-$ on $X$ given by $x_1=-1,x_2=0$. Clearly, $D^+$ and $D^-$ have empty intersection. The subset $U^+:=X\setminus D^+$ is the complement to a divisor on a smooth affine variety, so $U^+$ is affine; see~\cite[Lemma~3.3]{Na-2}. Similarly, the open subset $U^-:=X\setminus D^-$ is affine as well. 

Consider the rational function 
$$
f=\frac{x_3}{x_1-1}=\frac{x_1+1}{x_2^k}. 
$$
It is clear that $f$ is regular on $U^+$ and $x_1=x_2^kf-1$, $x_3=(x_1-1)f$. It follows that the algebra of regular functions $\KK[U^+]$ is the polynomial algebra with generators $x_2$ and $f$. Similarly, the algebra $\KK[U^-]$ is the polynomial algebra with generators $x_2$ and $g$, where
$$
g=\frac{x_3}{x_1+1}=\frac{x_1-1}{x_2^k}. 
$$
We conclude that $X$ is covered by two open charts $U^+$ and $U^-$ each isomorphic to $\AA^2$. So the surface $X$ is $A$-covered and is not isomorphic to $\AA^2$.
\end{example} 

\begin{remark}
The referee drew our attention to the fact that the idea of the computation performed in the above example is contained in the unpublished work~\cite{Da}. Moreover, the referee proposed the following generalization of this construction. Let $X$ be a smooth affine surface endowed with a smooth surjective $\AA^1$-fibration $\varphi\colon X\to\AA^1$. Miyanishi proved that every scheme-theoretic fiber of $\varphi$, which is not isomorphic to $\AA^1$, decomposes as a disjoint union of curves all isomorphic to $\AA^1$. Removing from each such fiber all such curves but one, we obtain an open affine subset in $X$ with a smooth surjective $\AA^1$-fibration over $\AA^1$ with irreducible fibers. By~\cite[Theorem~1]{KM}, such an affine open subset is isomorphic to $\AA^2$. We conclude that the surface $X$ is $A$-covered.  
\end{remark} 

\begin{TheoremB} \label{ThB}
Every $A$-covered variety is an $A$-image. 
\end{TheoremB}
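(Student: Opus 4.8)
The plan is to argue by induction on the number $r$ of charts in the covering $X=U_1\cup\ldots\cup U_r$, reducing everything to a gluing statement for two pieces. For $r=1$ we have $X\cong\AA^n$ and there is nothing to prove. For the inductive step I would write $X=P\cup Q$, where $P=U_r\cong\AA^n$ and $Q=U_1\cup\ldots\cup U_{r-1}$; the latter is open in $X$, again $A$-covered, hence an $A$-image by the induction hypothesis. Since $X$ is irreducible, every nonempty open subset is dense, so $P\cap Q\neq\varnothing$. Thus the whole theorem follows from the gluing claim: if $P\cong\AA^n$ is open in $X$ and $Q\subseteq X$ is an open $A$-image with $P\cap Q\neq\varnothing$, then $P\cup Q$ is an $A$-image.

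Fix a surjection $\beta\colon\AA^N\to Q\subseteq X$. The part of $X$ still to be covered is $S:=(P\cup Q)\setminus Q=P\setminus Q$, a closed subset of the chart $P\cong\AA^n$. The idea is to reach $S$ by moving the already-covered overlap $P\cap Q$ around inside $P$ using honest \emph{global} $\GG_a$-actions on $X$. On $P\cong\AA^n$ with coordinates $x_1,\ldots,x_n$, the translations are the flows of the locally nilpotent fields $\partial_{x_i}$; for a function $f\in\ker\partial_{x_i}$ vanishing to high enough order along $X\setminus P$, the field $f\,\partial_{x_i}$ extends to a regular locally nilpotent vector field on the smooth variety $X$ and therefore integrates to a $\GG_a$-subgroup of $\Aut(X)$ whose action is trivial off $P$. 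Choosing finitely many such $\GG_a$-subgroups $H_1,\ldots,H_k$, consider
\[
\Phi\colon \AA^{k}\times\AA^{N}\to X,\qquad \Phi(t_1,\ldots,t_k,w)=h_1(t_1)\cdots h_k(t_k).\beta(w),
\]
where $h_j(t_j)\in H_j$. Because each $H_j$ acts by morphisms on all of $X$, the map $\Phi$ is a morphism from the affine space $\AA^{k+N}$.

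It remains to show that $H_1,\ldots,H_k$ can be chosen so that $\Phi$ is surjective onto $P\cup Q$. Setting $t_1=\cdots=t_k=0$ gives $\Phi(0,w)=\beta(w)$, so the image contains $Q$. For a point $s\in S\subseteq P$, I would pick an overlap point $q\in P\cap Q$ lying in $\beta(\AA^N)$ and use that, inside $P\cong\AA^n$, the extended translation-type actions move $q$ to $s$: each such flow translates in one coordinate direction and hence keeps $q$ inside $P$, translations act transitively on $\AA^n$, and the localization caused by the multipliers $f$ is overcome by using enough coordinate directions and enough functions $f$, so that some word $h_1(t_1)\cdots h_k(t_k)$ carries $q$ to $s$. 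Hence $s$ lies in the image of $\Phi$, proving surjectivity onto $P\cup Q$; the induction then yields a surjection onto $X=U_1\cup\ldots\cup U_r$.

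The main obstacle is precisely the construction of sufficiently many honest global $\GG_a$-actions on $X$ in the second step: one must guarantee that the rational fields $f\,\partial_{x_i}$ become regular across the boundary $X\setminus P$ (only its divisorial part matters, by smoothness and the Hartogs property for vector fields) while keeping $f$ in the kernel of $\partial_{x_i}$, and that finitely many of the resulting subgroups already act transitively enough on the chart $P$ to sweep the overlap over all of $S$. This extension-and-transitivity problem is exactly the content supplied by the flexibility theory of~\cite{AFKKZ}; once the requisite $\GG_a$-actions are in hand, the rest is the bookkeeping above. Alternatively, if one knows that every $A$-covered variety is very flexible, the statement is immediate from Proposition~\ref{Pr2}.
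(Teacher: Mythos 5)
Your induction scaffolding is fine, but the gluing step contains a gap that cannot be repaired along the lines you propose. Everything rests on producing nontrivial global $\GG_a$-subgroups of $\Aut(X)$ by extending the localized shears $f\,\partial_{x_i}$ from the chart $P\cong\AA^n$ across $X\setminus P$. In general no such extensions exist, because an $A$-covered variety may admit no nontrivial $\GG_a$-action at all. Take $X$ to be the blow-up of $\PP^2$ at four points in general position (the del Pezzo surface of degree $5$): it is a smooth projective rational surface, hence $A$-covered, but $\Aut(X)$ is finite, so $H^0(X,T_X)=0$ and there is no nonzero regular vector field on $X$, locally nilpotent or otherwise. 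Any extension of $f\,\partial_{x_i}$ would be such a field, so the extension fails for every nonzero $f$, no matter what vanishing you impose. Note also that the condition you impose is not even the relevant one: $f$ is a regular function on $P$, so along a boundary divisor it typically has a \emph{pole}, and what matters is its pole order measured against the vanishing order of $\partial_{x_i}$ there (already on $\PP^1$, $f\partial_x$ extends exactly when $\deg f\le 2$). For the same reason your closing alternative is false: the degree-$5$ del Pezzo surface is $A$-covered, yet $\SAut(X)$ is trivial, so $A$-covered varieties need not be very flexible. Finally, the appeal to the flexibility theory of~\cite{AFKKZ} is misplaced: that theory operates on quasi-affine varieties and supplies no extension mechanism across the boundary of a chart in a complete variety.

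The paper's proof avoids working on $X$ altogether, and this is the idea your argument is missing: by~\cite[Theorem~3]{APS}, an $A$-covered variety $X$ is a geometric quotient of a very flexible \emph{quasi-affine} variety $X'$ by a torus action. Flexibility is exploited upstairs on $X'$, where it is actually available, so $X'$ is an $A$-image by Proposition~\ref{Pr2}, and then surjectivity descends through the quotient morphism $X'\to X$ by part~a) of Lemma~1. If you want to salvage your inductive scheme, the compositions of $\GG_a$-flows must likewise be performed on a quasi-affine model lying over $X$ (a universal torsor, in the spirit of the Cox ring presentation), not on $X$ itself.
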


\begin{proof}
By~\cite[Theorem~3]{APS}, an $A$-covered variety $X$ is a geometric quotient of a very fllexible quasi-affine variety by an action of a torus. So the claim follows from Proposition~\Ref{Pr2}. 
\end{proof} 

\begin{corollary}
Every complete rational surface is an $A$-image.
\end{corollary}

\begin{proof}
Passing to normalization and desingularization, we may assume that we deal with a smooth projective rational surface. The latter is $A$-covered; see \cite[Section~4]{APS}. 
\end{proof} 

\subsection{Homogeneous spaces} Let $X$ be a homogeneous space $G/H$, where $G$ is a connected linear algebraic group and $H$ is a closed subgroup in $G$. 

\begin{TheoremC} \label{ThC} 
A homogeneous space $X$ is an $A$-image if and only if $\KK[X]^{\times}=\KK^{\times}$. 
\end{TheoremC}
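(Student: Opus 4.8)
The forward implication is immediate from the discussion in the introduction: every $A$-image has only constant invertible regular functions, so an $A$-image $X=G/H$ must satisfy $\KK[X]^{\times}=\KK^{\times}$. The plan for the converse is to use the hypothesis to shrink the acting group $G$ down to a normal subgroup that is generated by $\GG_a$-subgroups, and hence very flexible, \emph{without changing the homogeneous space}. Once this is achieved, Proposition~\ref{Pr2} together with part~a) of the Lemma finishes the argument.

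First I would translate the condition $\KK[X]^{\times}=\KK^{\times}$ into group-theoretic terms. Let $G_u\subseteq G$ be the closed connected normal subgroup generated by all $\GG_a$-subgroups of $G$ (equivalently, by all unipotent elements); in characteristic zero this subgroup is generated by its one-parameter unipotent subgroups, and the quotient $D:=G/G_u$ is an algebraic torus. Using the identification $\KK[G/H]=\KK[G]^{H}$ and Rosenlicht's description of the units of a connected linear algebraic group, every invertible regular function on $X$ is, up to a scalar, a character of $G$ that is trivial on $H$, and every character of $G$ factors through $D$. Hence $\KK[X]^{\times}=\KK^{\times}$ is equivalent to the statement that no nontrivial character of $D$ vanishes on the image of $H$, that is, that this image is Zariski dense in $D$.

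The crucial point is now that the image of $H$ under $G\to D$ is a \emph{closed} subgroup, being the image of a homomorphism of algebraic groups, so density forces it to equal the whole torus $D$. Consequently $H$ surjects onto $G/G_u$, which yields $G=G_u\cdot H$ and therefore a $G_u$-equivariant isomorphism $X=G/H\cong G_u/(G_u\cap H)$. In particular the composition $G_u\hookrightarrow G\to G/H$ is a surjective morphism onto $X$. It then remains to observe that $G_u$ is very flexible: left translations by its one-parameter unipotent subgroups are $\GG_a$-actions, and since $G_u$ is generated by such subgroups these translations generate a subgroup of $\SAut(G_u)$ acting transitively on $G_u$. By Proposition~\ref{Pr2} the variety $G_u$ is an $A$-image, and applying part~a) of the Lemma to the surjection $G_u\to X$ shows that $X$ is an $A$-image.

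The steps that require the most care are structural rather than computational: verifying that $G_u$ is well defined with $G/G_u$ a torus and that $G_u$ is genuinely generated by its $\GG_a$-subgroups, and establishing the character-theoretic description of $\KK[X]^{\times}$. I expect the main obstacle to be the density-to-surjectivity step, since this is precisely where the hypothesis $\KK[X]^{\times}=\KK^{\times}$ is converted into the clean identity $G=G_u\cdot H$ that collapses the problem onto the very flexible group $G_u$; everything afterwards is a direct appeal to Rosenlicht's theorem, Proposition~\ref{Pr2}, and the Lemma.
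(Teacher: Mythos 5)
Your proof is correct, and its skeleton is the same as the paper's: reduce the problem to the subgroup of $G$ generated by unipotent elements (your $G_u$ is exactly the paper's $G^{\text s}$, the semidirect product of a maximal semisimple subgroup and the unipotent radical), note that this subgroup is generated by $\GG_a$-subgroups, and conclude via very flexibility and Proposition~\ref{Pr2}. The genuine difference is one of self-containedness. The paper outsources the key step to \cite[Lemma~5.1]{ASZ}, which is precisely the statement that $\KK[X]^{\times}=\KK^{\times}$ forces $G^{\text s}$ to act transitively on $X$, and outsources ``$G^{\text s}$ is generated by $\GG_a$-subgroups'' to \cite[Lemma~1.1]{Po}. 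You instead prove the transitivity statement from scratch: Rosenlicht's unit theorem identifies $\KK[X]^{\times}/\KK^{\times}$ with the characters of $G$ trivial on $H$; characters kill $G_u$ and hence factor through the torus $D=G/G_u$; and the image of $H$ in $D$ is closed (image of a homomorphism of algebraic groups), so the hypothesis upgrades density to the equality $G=G_u\cdot H$, whence $G_u$ acts transitively on $X$. All of these steps are sound, and the closed-image observation is indeed the crux, as you anticipated. The only cosmetic divergence is at the end: the paper concludes that $X$ itself is very flexible (the $\GG_a$-subgroups of $G^{\text s}$ act on $X$, so $\SAut(X)$ is transitive), whereas you establish very flexibility of the group $G_u$ as a variety and then push the $A$-image property to $X$ through the orbit map via part~a) of the Lemma; both are valid, yours costing one extra (trivial) application of the Lemma. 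What your route buys is a proof independent of \cite{ASZ}; what the paper's buys is brevity.
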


\begin{proof}
We already know that an $A$-image has no non-constant invertible regular function. Conversely, assume that $\KK[X]^{\times}=\KK^{\times}$. By~\cite[Lemma~5.1]{ASZ}, in this case
the variety $X$ is homogeneous with respect to the subgroup $G^{\text s}$ in $G$, which is a semidirect product of a maximal semisimple subgroup and the unipotent radical of $G$. Since the group $G^{\text s}$ is generated by $\GG_a$-subgroups (see, e.g., \cite[Lemma~1.1]{Po}), we conclude that $X$ is very flexible and hence is an $A$-image. 
\end{proof} 

\begin{remark}
Since a homogeneous space of the group $\SL(V)$ need not be rational (see, e.g., \cite[Example~1.22]{Po}), we conclude that an $A$-image is not always rational. 
\end{remark}

\begin{corollary} \label{cnfg}
There is a quasi-affine $A$-image $X$ with a non-finitely generated algebra of regular functions $\KK[X]$.  
\end{corollary}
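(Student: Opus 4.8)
The plan is to produce the required example as a homogeneous space and then invoke Theorem~C. Concretely, I would search for a connected \emph{semisimple} group $G$ and a closed subgroup $H\subseteq G$ such that the homogeneous space $X=G/H$ is quasi-affine while its algebra of regular functions $\KK[X]$ is not finitely generated. Theorem~C would then supply the surjection from an affine space for free, so the whole corollary reduces to exhibiting such a pair $(G,H)$.

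First I would check that the hypothesis of Theorem~C is automatic once $G$ is semisimple, so that there is in fact nothing to verify about units. Indeed, a semisimple group has trivial character group, so by Rosenlicht's description of the invertible functions on a connected algebraic group one has $\KK[G]^{\times}=\KK^{\times}$. On the other hand, the global regular functions on the quasi-affine variety $G/H$ are precisely the right-$H$-invariant functions on $G$, that is $\KK[X]=\KK[G]^{H}\subseteq\KK[G]$. Hence any unit $u\in\KK[X]$ is a unit of the larger algebra $\KK[G]$, whence $\KK[X]^{\times}\subseteq\KK[G]^{\times}=\KK^{\times}$ and therefore $\KK[X]^{\times}=\KK^{\times}$. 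By Theorem~C, every such $X$ is an $A$-image, with no further condition to check. To secure quasi-affineness I would take $H$ to be a nontrivial \emph{unipotent} subgroup: every unipotent subgroup is observable, so $G/H$ is quasi-affine and the identity $\KK[X]=\KK[G]^{H}$ used above holds; note also that $X$ cannot be affine, since by Matsushima's criterion an affine homogeneous space of a reductive group has reductive stabilizer.

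With these formal reductions in place, the entire problem collapses to one point: finding a unipotent subgroup $H$ of some $\SL_n$ for which the invariant algebra $\KK[\SL_n]^{H}$ fails to be finitely generated, equivalently a unipotent $H$ that is observable but not a Grosshans subgroup. This last step is the main obstacle, and I do not expect to prove it from scratch but rather to cite it: it is exactly the phenomenon underlying Nagata's negative solution to Hilbert's fourteenth problem, which produces a linear action of a vector group $\GG_a^{r}$ with non-finitely generated ring of invariants. In its homogeneous-space form (as developed by Grosshans and by Winkelmann) this yields a unipotent subgroup $H\subseteq\SL_n$ with $\KK[\SL_n]^{H}$ non-finitely generated. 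Taking $X=\SL_n/H$ then gives a quasi-affine $A$-image whose algebra of regular functions is not finitely generated, as required. Everything except the non-finite generation is soft; that one input carries the full weight of Hilbert's fourteenth problem.
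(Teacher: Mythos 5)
Your proposal is correct and takes essentially the same route as the paper: both construct $X=\SL(V)/H$ where $H$ is a unipotent subgroup arising from Nagata's counterexample to Hilbert's Fourteenth Problem, cite Grosshans for the fact that $\KK[X]=\KK[\SL(V)]^{H}$ is then not finitely generated, and apply Theorem~C to conclude $X$ is an $A$-image. The additional verifications you include (triviality of units via semisimplicity of $G$, quasi-affineness via observability of unipotent subgroups) are details the paper leaves implicit, not a different argument.
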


\begin{proof}
Let $H\subseteq\GL(V)$ be a unipotent subgroup such that the algebra of invariants $\KK[V]^H$ is not finitely generated. Examples of such subgroups are known since the work of  Nagata~\cite{Na}; they provide counterexamples to Hilbert's Fourteenth Problem. It is proved in~\cite{Gr} that in this case the algebra of regular functions on the quasi-affine homogeneous space $X:=\SL(V)/H$ is not finitely generated, while Theorem~C implies that $X$ is an $A$-image. 
\end{proof}

\section{Concluding remarks and problems}

We say that a morphism $\pi\colon X'\to X$ of irreducible algebraic varieties \textit{does not contract divisors} if the image in $X$ of any prime divisor $D$ on $X'$ is not contained in a closed subvariety of codimension $2$ in $X$. 

\begin{proposition} \label{prcod2}
Let $\pi\colon X'\to X$ be a surjective morphism of irreducible varieties which does not contract divisors and the variety $X'$ be quasi-affine and very flexible. If $Y\subseteq X$ is a closed subvariety of codimension at least $2$, then the complement $X\setminus Y$ is an $A$-image.
\end{proposition}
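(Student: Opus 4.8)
The plan is to reduce the statement to Theorem~\ref{thvf} by pulling back the codimension-condition on $Y$ through $\pi$, and then to transport an $A$-image structure across the surjection $\pi$ using part~a) of the Lemma. The key point is that the hypothesis ``$\pi$ does not contract divisors'' is precisely what guarantees that the preimage of a high-codimension subvariety stays high-codimensional, which is what allows us to invoke the deletion theorem.

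First I would set $Y'=\pi^{-1}(Y)\subseteq X'$ and argue that $Y'$ has codimension at least $2$ in $X'$. Suppose not; then $Y'$ contains a prime divisor $D$ on $X'$. Since $\pi$ does not contract divisors, the image $\pi(D)\subseteq Y$ is not contained in any closed subvariety of codimension $2$ in $X$; but $\pi(D)\subseteq Y$ and $Y$ itself has codimension at least $2$, a contradiction. Hence $\operatorname{codim}_{X'} Y'\ge 2$. Strictly speaking one should pass to the closed subset $Z'\subseteq Y'$ cut out by the irreducible components of maximal dimension and note that removing a proper closed subset of codimension $\ge 2$ is what Theorem~\ref{thvf} requires; the argument above shows no component of $Y'$ can be a divisor, so every component has codimension at least $2$.

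Next, by Theorem~\ref{thvf} applied to the quasi-affine very flexible variety $X'$ and the closed subvariety $Y'$ of codimension at least $2$, the complement $X'\setminus Y'$ is again very flexible. By Proposition~\ref{Pr2}, the variety $X'\setminus Y'$ is therefore an $A$-image.

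Finally I would check that the restriction $\pi\colon X'\setminus Y'\to X\setminus Y$ is surjective. Indeed $\pi(X'\setminus Y')\subseteq X\setminus Y$ because $Y'=\pi^{-1}(Y)$, and conversely any point $x\in X\setminus Y$ has a preimage under the surjection $\pi$, and this preimage lies in $X'\setminus Y'$ since $x\notin Y$. Thus $\pi$ restricts to a surjective morphism from an $A$-image onto $X\setminus Y$, and part~a) of the Lemma yields that $X\setminus Y$ is an $A$-image. The main obstacle I anticipate is the codimension bookkeeping in the first step: one must be careful that ``does not contract divisors'' genuinely forbids a whole divisor of $X'$ from landing inside $Y$, and handle the possibility that $Y$ or $Y'$ is reducible by treating components separately.
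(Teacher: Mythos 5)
Your proposal is correct and follows essentially the same route as the paper: both use the ``does not contract divisors'' hypothesis to see that $\pi^{-1}(Y)$ contains no divisor of $X'$, then apply Theorem~\ref{thvf} to conclude $X'\setminus\pi^{-1}(Y)$ is very flexible (hence an $A$-image by Proposition~\ref{Pr2}), and finally push this forward along the surjective restriction of $\pi$. Your write-up simply makes explicit the codimension bookkeeping and the surjectivity check that the paper leaves implicit.
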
 

\begin{proof}
By assumptions, the preimage $\pi^{-1}(Y)$ does not contain divisors on $X'$. By Theorem~\ref{thvf}, the variety $X'\setminus\pi^{-1}(Y)$ is very flexible and hence is an $A$-image. So 
$X\setminus Y$ is an $A$-image as the image of $X'\setminus\pi^{-1}(Y)$ under the restriction of the morphism $\pi$. 
\end{proof}

\begin{remark}
Note that Proposition~\ref{prcod2} is applicable to varieties from Theorems~A-C. More precisely, for a non-degenerate toric variety $X$ the good quotient $\pi\colon\AA^l\setminus Z\to X$ does not contract divisors by \cite[Proposition~1.6.1.6.(ii)]{ADHL} and the variety $\AA^l\setminus Z$ is very flexible by Corollary~\ref{cor}. For $A$-covered varieties, the desired morphism $\pi\colon X'\to X$ is provided by~\cite[Theorem~3]{APS}. Finally, for a homogeneous space $G/H$ such that the subgroup $G^{\text s}$ in $G$ acts on $G/H$ transitively, the orbit map
$\pi\colon G^{\text s}\to G/H$ is surjective, fibers of this morphism are pairwise isomorphic, so it does not contract divisors, and the variety $G^{\text s}$ is affine and very flexible. 
\end{remark}

Continuing the line of Corollary~\ref{cnfg} with exotic examples of $A$-images, we prove the following result. 

\begin{proposition}
There exists a three-dimensional complete $A$-image $X$ that is not embeddable into any toric variety. 
\end{proposition}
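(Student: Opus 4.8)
The plan is to construct $X$ as a quotient-type image from a very flexible quasi-affine variety, using the machinery of Proposition~\ref{prcod2} or Theorem~B, and then to exhibit the non-embeddability into any toric variety by a geometric obstruction. The key tension is that we want two competing features simultaneously: the variety must be "nice enough" to be an $A$-image (so it should come from a flexible variety via a quotient or a surjection that does not contract divisors), yet "bad enough" to resist embedding into any toric ambient space. Since smooth complete toric varieties and their subvarieties have strong positivity and combinatorial constraints, the natural strategy is to find a three-dimensional variety whose intrinsic geometry violates a known necessary condition for being embeddable into a toric variety.

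First I would recall the standard obstruction to toric embeddability. A classical result (going back to work on the Cox ring / total coordinate space philosophy in \cite{ADHL}) states that a normal variety embeds into a toric variety if and only if it has a finitely generated Cox ring of a suitable form; more concretely, a divisorial variety with infinitely many non-equivalent extremal contractions, or with a divisor class group whose structure forbids a toric realization, cannot be so embedded. The cleanest route is to take a complete rational (or unirational) threefold $X$ that is known in the literature to be non-toric-embeddable, and then verify it is an $A$-image. A natural candidate is a suitable $\mathbb{P}^1$-bundle or blow-up construction, or an $A$-covered threefold: by Theorem~B, \emph{every} $A$-covered variety is an $A$-image, so it suffices to produce a three-dimensional $A$-covered variety that is not embeddable into any toric variety.

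\medskip

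The main steps I would carry out, in order, are the following. First, I would produce an explicit complete threefold $X$ covered by charts isomorphic to $\AA^3$; a smooth complete rational threefold admitting such an atlas, for instance obtained from $\mathbb{P}^3$ or a projective bundle by a sequence of blow-ups along smooth centers chosen so that the affine-cell structure is preserved. Second, I would invoke Theorem~B to conclude immediately that $X$ is an $A$-image, since $A$-covered varieties are $A$-images. Third, and this is the crux, I would show $X$ is not embeddable into any toric variety: the plan is to exhibit a numerical or cohomological invariant of $X$ (such as the failure of the divisor class group together with the intersection form to admit a toric fan realization, or the presence of a curve class whose deformations force a non-toric automorphism behavior) that is incompatible with being a locally closed subvariety of a toric variety. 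Concretely, if $X$ embeds in a toric variety $T$, then every effective divisor class on $X$ would be a restriction of a toric class, constraining $\Cl(X)$ and the cone of effective divisors severely; I would choose $X$ so that its effective cone or its Cox ring structure is demonstrably non-toric.

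\medskip

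The hard part will be step three: rigorously ruling out embeddability into \emph{every} toric variety, not just a specific one. Embeddability into some toric variety is a surprisingly weak condition—many singular or mildly non-projective toric ambients are available—so a soft dimension count will not suffice; I expect to need a genuine structural obstruction, most likely phrased in terms of the Cox ring of $X$ (following \cite{ADHL}) failing to be a polynomial ring localized at a product of coordinate functions, or in terms of an intrinsic invariant preserved under toric embedding. The delicate point is to package this obstruction so that it applies uniformly to all toric targets, which I anticipate will require either a result from the literature identifying the precise characterization of toric-embeddable varieties or an explicit computation showing that the relevant invariant of our chosen $X$ lies outside the range achievable by any subvariety of a toric variety.
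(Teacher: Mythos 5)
Your plan fails at step three, and not for lack of detail: the type of obstruction you propose cannot exist. Every projective variety is a closed subvariety of some $\PP^N$, and $\PP^N$ is toric; more generally, by W\l odarczyk's theorem (\cite[Theorem~A]{Wl}, the result the paper actually uses) a normal variety admits a closed embedding into a toric variety if and only if every pair of its points lies in a common affine open neighborhood --- a condition satisfied by \emph{every} quasi-projective variety. Consequently no invariant of the kind you suggest (divisor class group, effective cone, intersection form, Cox ring structure or its finite generation) can obstruct toric embeddability: projective varieties with arbitrarily pathological such invariants, including non-finitely generated Cox rings, all embed into projective space. Your ``classical result'' characterizing toric embeddability by a Cox-ring condition is not a theorem, and your step-one candidates --- blow-ups of $\PP^3$ along smooth centers, projective bundles --- are projective, hence automatically toric-embeddable, so they can never satisfy step three. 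The sought example must be complete but non-projective, and the only workable obstruction is W\l odarczyk's point-separation criterion, which is local in nature, not numerical or cohomological. Beyond this conceptual error, the proposal also never exhibits a concrete variety or a concrete obstruction; both halves of the argument are deferred.

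The paper's proof is quite different from your outline. It takes $X$ to be \'Swi\c{e}cicka's example \cite[Example~6.4]{Sw}: a complete three-dimensional good quotient $U\to X$ by a one-dimensional torus, where $U$ is an open subset of the Grassmannian $\Gr(4,2)$ whose complement has codimension at least~$2$, constructed precisely so that two points of $X$ have no common affine neighborhood; W\l odarczyk's theorem then rules out embedding into any toric variety. The $A$-image property is not obtained from Theorem~B (the quotient $X$ is not claimed to be $A$-covered), but from Theorem~C together with Proposition~\ref{prcod2}: since $\Gr(4,2)$ is homogeneous under $\SL_4$, the orbit map $\SL_4\to\Gr(4,2)$ is surjective, does not contract divisors, and has affine very flexible source, so removing the codimension-$\ge 2$ subset $\Gr(4,2)\setminus U$ leaves an $A$-image $U$; then $X$, being the image of $U$ under the quotient morphism, is an $A$-image. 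If you wish to repair your approach, the two missing ingredients are exactly these: W\l odarczyk's criterion as the characterization of toric embeddability, and a concrete complete non-projective threefold violating it that is simultaneously dominated by a flexible variety via a morphism not contracting divisors.
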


\begin{proof}
It is proved in \cite[Theorem~A]{Wl} that an irreducible normal variety $X$ admits a closed embedding into a toric variety if and only if every pair of points on $X$ is contained in a common affine neighborhood. In~\cite[Example~6.4]{Sw} the author constructs a good quotient $U\to X$ by an action of a one-dimensional torus $T$, where $U$ is an open subset in the Grassmannian $\Gr(4,2)$, the quotient space $X$ is complete, and there are two points on $X$ which are not contained in a common affine neighborhood. It follows that $X$ is three-dimensinal and it is not embeddable into any toric variety.

It is clear from the construction in~\cite[Example~6.4]{Sw} that the complement $\Gr(4,2)\setminus U$ has codimension at least $2$ in $\Gr(4,2)$. Since the Grassmannian $\Gr(4,2)$ is a homogeneous space of the group $\SL_4$, Theorem~C and Proposition~\ref{prcod2} imply that $X$ is an $A$-image.
\end{proof} 

\begin{corollary}
An $A$-image need not be quasiprojective. 
\end{corollary}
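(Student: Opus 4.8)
The plan is to derive this corollary immediately from the preceding proposition, using a single classical separation property of quasiprojective varieties. Recall that the proof of the proposition produces a complete $A$-image $X$ on which there are two points that are \emph{not} contained in a common affine open neighborhood (this is precisely the feature inherited from the construction in \cite[Example~6.4]{Sw}, and it is what obstructs embeddability into a toric variety via \cite[Theorem~A]{Wl}). Note that completeness alone does not preclude quasiprojectivity; the genuine obstruction is this failure of affine separation. So I would show that quasiprojectivity of $X$ would force every pair of points to lie in a common affine open subset, contradicting the proposition.

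First I would record the standard lemma that on a quasiprojective variety any finite subset is contained in a common affine open subset. To prove it, realize $X$ as a locally closed subvariety of some $\PP^N$ and let $Z=\closure{X}\setminus X$ be its closed boundary. Given a finite set $S\subseteq X$, we have $S\cap Z=\emptyset$, so for each $s\in S$ there is a homogeneous form vanishing on $Z$ but not at $s$. Bringing these to a common degree and taking a suitable linear combination yields a single hypersurface $H\subset\PP^N$ with $Z\subseteq H$ and $S\cap H=\emptyset$. Then $X\setminus H=\closure{X}\setminus H$ is closed in the affine variety $\PP^N\setminus H$, hence is itself affine, and it contains $S$.

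Applying the lemma to the two points witnessing the failure in the proposition gives the contradiction at once: those points would have to lie in a common affine open, which they do not. Hence the $A$-image $X$ is not quasiprojective. The only mildly technical ingredient is the construction of the separating hypersurface $H$, which is a routine avoidance argument, so I expect no real obstacle here; the corollary is in essence a direct reformulation of the proposition through the affine-separation property of quasiprojective varieties.
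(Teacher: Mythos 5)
Your proof is correct and is precisely the argument the paper intends: the corollary is stated without proof as an immediate consequence of the preceding proposition, whose construction (via the quotient from \cite[Example~6.4]{Sw}) produces a complete $A$-image $X$ containing two points with no common affine open neighborhood. Your separation lemma --- that any finite subset of a quasiprojective variety lies in a common affine open subset, proved by the hypersurface-avoidance argument --- is exactly the standard fact that converts this failure into non-quasiprojectivity, so your route and the paper's implicit one coincide.
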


Now we would like to formulate some problems related to the subject of this note. 

\begin{problem}
Find necessary and sufficient conditions for an algebraic variety $X$ to be an $A$-image. 
\end{problem}

One can also put a more special problem. 

\begin{problem} \label{prob}
Characterize $n$-dimensional algebraic varieties $X$ such that there exists a surjective morphism $\varphi\colon\AA^n\to X$. 
\end{problem}

In fact, we have no example of an $n$-dimensional $A$-image $X$ such that there is no surjective morphism $\varphi\colon\AA^n\to X$. In particular, a variety $X$ in Problem~\ref{prob} need not be rational: there are examples of quotient spaces $X=\AA^n/\!/H$, where $H$ is a finite group and $X$ is not rational; see \cite[Theorem~3.6]{Sa}. 

\begin{remark}
In the preliminary version of this note we also asked whether for any irreducible algebraic variety $X$ there exists a surjective morphism $\phi\colon V\to X$ from an irreducible affine variety $V$. The referee informed us that the answer to this question is positive. Namely, Jouanolou~\cite{Jo} proved that every quasi-projective variety $X$ admits a vector bundle torsor $V\to X$ with affine total space $V$. This result was extended by Thomason (see~\cite{We}) to a wider class of algebraic varieties including all smooth algebraic varieties. Applying Hironaka's desingularization theorem, we obtain a desingularization $\widetilde{X}\to X$ and then use Jouanolou-Thomason's construction $\widetilde{V}\to \widetilde{X}$ to get a surjection $\widetilde{V}\to X$.  Moreover, the referee observed that $X$ is an $A$-image if and only if any affine variety $\widetilde{V}$ obtained this way is an $A$-image. 
\end{remark}

The next question asks whether we may use $A$-images as ambient spaces for arbitrary algebraic varieties, just as affine (projective) spaces serve for affine (projective) varieties.  

\begin{problem}
Let $X$ be an algebraic variety. Is it possible to realize $X$ as a closed subvariety in some $A$-image $\widetilde{X}$?
\end{problem}

Finally, let us consider an even more general situation. It is well known that the image of a morphism $\varphi\colon X_1\to X_2$ of algebraic varieties need not be a subvariety in $X_2$. By Chevalley's theorem, the image of a morphism is a constructible subset of $X_2$, i.e., a finite disjoint union of locally closed subvarieties in $X_2$; see~\cite[Theorem~I.4.4]{Hu}. 

\begin{problem}
Let $W$ be an irreducible constructible subset of an algebraic variety $X_2$. Do there exist an irreducible algebraic variety $X_1$ and a morphism $\varphi\colon X_1\to X_2$ such that the image of $\varphi$ coincides with~$W$? 
\end{problem}

\begin{remark}
After a preprint version of this note appeared in September, 2022, three more works~\cite{Ba,KZ,Ku} with new results on $A$-images became available. 
\end{remark}

\smallskip

\textit{Acknowledgements}. The author is grateful to Yuri Prokhorov and Constantin Shramov for helpful consultations and references, and to Viktor Balch Barth for useful e-mail correspondence. Special thanks are due to the referee for deep observations and comments that clarify many points related to this research. 

{}
\end{document}